\documentclass{conm-p-l}

\usepackage{amssymb,mathrsfs,mathscinet}
\usepackage{mathtools}
\usepackage{graphicx}

\usepackage{ifpdf}
\ifpdf 
    \usepackage[off]{pdfsync}
\fi
\newcommand{\ifpdfsyncstart}{\ifpdf \pdfsyncstart \fi}
\newcommand{\ifpdfsyncstop}{\ifpdf \pdfsyncstop \fi}
\usepackage{hyperref}

% Theorem environments
% 
\newtheorem{thm}{Theorem}[section]

\newtheorem{cor}[thm]{Corollary}

\newtheorem{prop}[thm]{Proposition}
\newtheorem{conj}[thm]{Conjecture}
\newtheorem{question}[thm]{Question}
\newtheorem{prob}[thm]{Problem}

\theoremstyle{definition}
\newtheorem{exm}[thm]{Example}
\newtheorem*{exm:nonintqpCont}{%
    Example \ref{exm:nonintqp} Continued%
}%
\newtheorem{defn}[thm]{Definition}
\newtheorem{openproblem}[thm]{Open Problem}

% Set names
%

\newcommand{\Z}{\mathbb{Z}}
\newcommand{\Q}{\mathbb{Q}}
\newcommand{\R}{\mathbb{R}}

% Brackets
%

\newcommand{\braces}[1]{\left\lbrace #1 \right\rbrace}

% Variant symbols
% 

% Macros
%
\newcommand{\D}{\mathcal{D}}
\DeclareMathOperator{\GL}{GL}
\newcommand{\SLZ}{\GL_{n}(\Z)}
\newcommand{\G}{\SLZ}
\DeclareMathOperator{\Aff}{Af{}f}

\DeclareMathOperator{\conv}{conv}

\newcommand{\Ehr}{\mathcal{L}}

\newlength{\negone}
\settowidth{\negone}{\( -1 \)}

% Title page information
% 
\title[%
    Quasi-period Collapse
]{%
    Quasi-period Collapse and \( \operatorname{GL}_{n}(\mathbb{Z})
    \)-Scissors Congruence in Rational Polytopes
}%

\author{%
    Christian Haase
}%
\address{%
    Fachbereich Mathematik und Informatik \\
    Freie Universit\"at Berlin
}%
\email{%
    Christian.Haase@Math.FU-Berlin.de
}%
\thanks{%
    The first author is supported by Emmy Noether grant HA 4383/1
    of the German Research Foundation (DFG)%
}%

\author{%
    Tyrrell B. McAllister
}%
\address{%
    Wiskunde en Informatica \\
    Technische Universiteit Eindhoven
}%
\email{%
    tmcallis@win.tue.nl
}%
\thanks{%
    The second author is supported by the Netherlands Organisation
    for Scientific Research (NWO) Mathematics Cluster DIAMANT%
}%

\subjclass[2000]{}

\date{}

\begin{document}
    \begin{abstract}
        Quasi-period collapse occurs when the Ehrhart
        quasi-poly\-nomial of a rational polytope has a
        quasi-period less than the denominator of that polytope.
        This phenomenon is poorly understood, and all known cases
        in which it occurs have been proven with ad hoc methods.
        In this note, we present a conjectural explanation for
        quasi-period collapse in rational polytopes.  We show that
        this explanation applies to some previous cases appearing
        in the literature.  We also exhibit examples of Ehrhart
        polynomials of rational polytopes that are not the Ehrhart
        polynomials of any integral polytope.
        
        Our approach depends on the invariance of the Ehrhart
        quasi-poly\-nomial under the action of affine unimodular
        transformations.  Motivated by the similarity of this idea
        to the scissors congruence problem, we explore the
        development of a Dehn-like invariant for rational
        polytopes in the lattice setting.
    \end{abstract}

    \maketitle

    \section{Introduction}

    A \emph{convex rational} (respectively, \emph{integral})
    \emph{polytope} \( P \subset \R^{n} \) is the convex hull of
    finitely many points in \( \Q^{n} \) (respectively, \( \Z^{n}
    \)).  The \emph{dimension} of \( P \) is the dimension of the
    affine subspace of \( \R^{n} \) spanned by \( P \).  Dilating
    \( P \) by a positive integer factor \( k \) yields the
    polytope \( k P = \braces{x \in \R^{n} : \tfrac{1}{k} x \in P}
    \).  The \emph{denominator} of \( P \) is the minimum positive
    integer \( \D \) such that \( \D P \) is an integral polytope.
    A seminal result of Ehrhart in 1962 \cite{Ehr62} provides a
    beautiful description of the counting function giving the
    number \( \lvert kP \cap \Z^{n} \rvert \) of integer lattice
    points in \( k P \).
    \begin{thm}[\cite{Ehr62}]
    \label{thm:EhrhartQPs}
        If \( P \subset \R^{n} \) is a \( d \)-dimensional
        rational polytope, then \( \lvert kP \cap \Z^{n} \rvert \)
        is given by the restriction to the positive integers of a
        degree-\( d \) quasi-polynomial \( \Ehr_{P}: \Z \to \Z \).
        That is, there exist periodic functions \( c_{0}, \dotsc,
        c_{d} \colon \Z \to \Q \) such that \( c_{d} \) is not
        identically zero and
        \begin{equation*}
            \lvert kP \cap \Z^{n} \rvert
            =
            \Ehr_{P}(k)
            =
            c_{d}(k) k^{d} + \dotsb + c_{1}(k) k + c_{0}(k),
            \qquad
            k \in \Z_{> 0}.
        \end{equation*}         
    \end{thm}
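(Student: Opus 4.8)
The plan is to prove Ehrhart's theorem in two stages: reduce to the case of a rational simplex, and then treat a rational simplex by the classical generating-function computation on the cone over it. For the reduction, I would first fix a triangulation of \( P \) that uses only the vertices of \( P \), so that every cell is a rational simplex whose denominator divides \( \D \). To keep the lattice-point count additive over the cells without over- or under-counting points on shared faces, I would pass to a \emph{half-open} triangulation: after ordering the vertices of \( P \), each maximal cell \( \sigma \) is replaced by a half-open simplex \( \sigma^{\circ} \), obtained by deleting a suitable subset of its facets, in such a way that \( P = \bigsqcup_{\sigma} \sigma^{\circ} \) is an honest disjoint union. Then \( \abs{kP \cap \Z^{n}} = \sum_{\sigma} \abs{k \sigma^{\circ} \cap \Z^{n}} \), and, since a sum of quasi-polynomials sharing a common period is again a quasi-polynomial of that period, it is enough to show that each term \( \abs{k \sigma^{\circ} \cap \Z^{n}} \) agrees for \( k > 0 \) with a quasi-polynomial of degree at most \( d \) and period dividing \( \D \).

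For the simplicial case, let \( \sigma \) be a (possibly half-open) \( d \)-simplex with vertices \( v_{0}, \dotsc, v_{d} \in \Q^{n} \). Lifting to height one, put \( w_{i} = (v_{i}, 1) \in \Q^{n+1} \) and form the cone \( C = \R_{\ge 0} w_{0} + \dotsb + \R_{\ge 0} w_{d} \subseteq \R^{n+1} \); the lattice points of \( k \sigma^{\circ} \) then correspond exactly to the lattice points of the (half-open version of the) cone \( C \) lying on the hyperplane \( x_{n+1} = k \). Replacing each \( w_{i} \) by the primitive lattice vector \( \tilde w_{i} \) on its ray, its last coordinate \( c_{i} \) is a positive integer dividing \( \D \). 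With the appropriately half-open fundamental parallelepiped \( \Pi = \braces{\sum_{i} \lambda_{i} \tilde w_{i} : 0 \le \lambda_{i} < 1} \), every lattice point of \( C \) is uniquely of the form \( p + \sum_{i} a_{i} \tilde w_{i} \) with \( p \in \Pi \cap \Z^{n+1} \) and \( a_{i} \in \Z_{\ge 0} \); specializing the first \( n \) variables of the lattice-point generating function of \( C \) to \( 1 \) and the last variable to \( t \) gives
\begin{equation*}
    \sum_{k \ge 0} \abs{k \sigma^{\circ} \cap \Z^{n}}\, t^{k}
    \;=\;
    \frac{\sum_{p \in \Pi \cap \Z^{n+1}} t^{\, x_{n+1}(p)}}{\prod_{i=0}^{d} \bigl( 1 - t^{c_{i}} \bigr)}\,,
\end{equation*}
a rational function whose denominator divides \( \bigl( 1 - t^{\D} \bigr)^{d+1} \). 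A partial-fraction expansion of the right-hand side shows that its power-series coefficients agree, for \( k > 0 \), with a quasi-polynomial in \( k \) of degree at most \( d \) and period dividing \( \D \), completing the reduction.

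Two points remain. To see that the degree is exactly \( d \) (i.e.\ that the top coefficient \( c_{d} \) is not identically zero), I would note that the average of \( c_{d} \) over a full period of length \( \D \) equals the \( d \)-dimensional relative volume of \( P \), which is positive because \( \dim P = d \); hence \( c_{d} \not\equiv 0 \). Integrality is immediate: \( \abs{kP \cap \Z^{n}} \in \Z \) for every \( k \in \Z_{>0} \) by definition, so the resulting quasi-polynomial \( \Ehr_{P} \) is integer-valued on the positive integers. I expect the main obstacle to be the combinatorial bookkeeping, namely arranging the half-open triangulation of \( P \) together with the matching half-open fundamental parallelepipeds so that each boundary lattice point is counted exactly once; once these disjoint decompositions are set up correctly, the generating-function identity and the extraction of a quasi-polynomial from a rational function with denominator \( (1 - t^{\D})^{d+1} \) are both routine.
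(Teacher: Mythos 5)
The paper states this theorem as background, citing Ehrhart \cite{Ehr62}, and gives no proof of its own, so there is no in-paper argument for you to match. Your proposal is, in outline, a correct version of the standard modern proof: choose a half-open triangulation of \( P \) into rational simplices using only the vertices of \( P \), cone over each simplex at height one, decompose the lattice points of the simplicial cone via a (suitably half-open) fundamental parallelepiped, read off a rational generating function whose denominator divides \( (1-t^{\D})^{d+1} \), and extract a quasi-polynomial of degree at most \( d \) and period dividing \( \D \) by partial fractions. These steps are sound, and the reduction correctly handles the additivity over cells without double-counting.

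The one step that is genuinely wrong as stated is your justification that \( c_{d} \not\equiv 0 \). You claim that the average of \( c_{d} \) over a full period of length \( \D \) equals the \( d \)-dimensional relative volume of \( P \). This fails when \( P \) is not full-dimensional. Take \( S = [(\tfrac12, 0)^{t}, (\tfrac12, 1)^{t}] \subset \R^{2} \), a \( 1 \)-dimensional rational polytope with \( \D = 2 \). Then \( \Ehr_{S}(k) = k + 1 \) when \( k \) is even and \( \Ehr_{S}(k) = 0 \) when \( k \) is odd, so \( c_{1}(0) = 1 \), \( c_{1}(1) = 0 \), and the average is \( \tfrac12 \); yet the relative volume of \( S \) with respect to the rank-one lattice in its linear span is \( 1 \) (and the lattice \( \Z^{2} \cap \operatorname{aff}(S) \) is empty, so the more usual notion of relative volume is not even defined here). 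The conclusion you want is still true, but needs a different argument. The cleanest fix is to restrict to multiples of \( \D \): the function \( k \mapsto \Ehr_{P}(\D k) = \Ehr_{\D P}(k) \) is the Ehrhart polynomial of the integral \( d \)-polytope \( \D P \), whose leading coefficient is the positive relative volume of \( \D P \); comparing leading terms gives \( c_{d}(0) = \operatorname{relvol}(\D P)/\D^{d} > 0 \), hence \( c_{d} \not\equiv 0 \). Equivalently, within your generating-function computation, the rational function has a pole of order exactly \( d+1 \) at \( t = 1 \) because the numerator \( \sum_{p \in \Pi \cap \Z^{n+1}} t^{x_{n+1}(p)} \) does not vanish there, and that pole alone forces the restriction of \( \Ehr_{P} \) to the residue class \( 0 \bmod \D \) to have degree exactly \( d \).
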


    We call \( \Ehr_{P} \) the \emph{Ehrhart quasi-polynomial} of
    \( P \).  A positive integer \( N \) is a \emph{quasi-period}
    of \( \Ehr_{P} \) (or of \( P \)) if \( N \) is divisible by
    the periods of all of the coefficient functions \( c_{i} \),
    \( 0 \le i \le d \).  (We do not assume that \( N \) is the
    minimum such positive integer.)
    
    When \( P \) is an integral polytope, \( \Ehr_{P} \) has
    quasi-period 1; that is, \( \Ehr_{P}(k) \) is a polynomial
    function of \( k \).  More generally, the denominator \( \D \)
    of a polytope \( P \) is a quasi-period of \( \Ehr_{P} \)
    \cite{Ehr62}.  It is somewhat surprising that \( \D \) is not
    always the \emph{minimum} quasi-period of \( P \).  When the
    minimum quasi-period of \( P \) is less than \( \D \), we say
    that \emph{quasi-period collapse} has occurred.  Several
    important polyhedra appearing in the representation theory of
    Lie algebras exhibit period collapse, but the known proofs of
    these results are not given in terms of the polyhedral
    geometry \cite{DLM04, DLM06, DW02, KR86}.

    Quasi-period collapse cannot happen in dimension \( 1 \), but
    there exist families of polygons in \( \R^{2} \) with
    arbitrarily large denominators whose minimum quasi-periods are
    1.  This result was originally proved in \cite{MW05}, where
    the proof of polynomiality involved subdividing the polygons
    into polygonal pieces whose Ehrhart quasi-polynomials could be
    computed.  The periodic parts for these pieces could be seen
    by inspection to cancel, with the result that the counting
    function for the entire polygon was a polynomial.
    
    In this paper, we give a new approach to understanding
    quasi-period collapse in rational polytopes.  This approach
    yields a much simpler explanation for the polynomiality of the
    Ehrhart quasi-polynomials appearing in \cite{MW05} (see
    Example \ref{exm:MW05Triang} below).  The demonstration again
    depends upon polyhedral subdivisions.  However, instead of
    explicitly computing the Ehrhart quasi-polynomials of the
    pieces in this subdivision, we rearrange unimodular images of
    the pieces to form an integral polytope.  Since this
    rearrangement does not change the number of lattice points in
    the polytope or in any of its dilations, it follows
    immediately that the original Ehrhart quasi-polynomial is a
    polynomial.  Thus we avoid computing the Ehrhart
    quasi-polynomials of the individual pieces.
    
    This approach provides a unified framework for demonstrating
    quasi-period collapse of rational polytopes.  We conjecture
    that a polytope exhibits quasi-period collapse only when the
    pieces of some subdivision of the polytope can be rearranged
    by affine unimodular transformations to form a polyhedral
    complex with the ``right'' denominator.  See Conjecture
    \ref{conj:EhrhartPolysIffUnionOfIntegral} for a precise
    statement.  This motivates a study of the invariants of
    rational polyhedra under polyhedral subdivision and piecewise
    unimodular transformations.  This is reminiscent of the
    scissors congruence problem for the group of rigid motions in
    \( \R^{3} \).  In the classical scissors congruence problem,
    congruence classes of polyhedra are parameterized by volume
    and the Dehn invariant \cite{Syd65}.  This suggests that an
    analogous system of invariants might determine when two
    rational polyhedra are equidecomposable with respect to the
    group \( \Aff_n(\Z) \cong \G \ltimes \Z^{n} \) of affine
    unimodular transformations.

    \section{Proving polynomiality of Ehrhart quasi-polynomials}
    \label{sec:Examples}

    The phenomenon of quasi-period collapse for rational polytopes
    is in general poorly understood.  In this section, we give
    examples of rational polytopes that can be shown to have
    quasi-period 1 by subdivision and rearrangement of unimodular
    images of the pieces.  These examples serve to motivate the
    following section, in which we conjecture that this method
    applies to all examples of quasi-period collapse among
    rational polytopes.
    
    \begin{exm}    
    \label{exm:MW05Triang}
        Given an integer $\D \geq 2$, let $T$ be the triangle with
        vertices \( (0,0)^{t} \), \( (1,\frac{\D-1}{\D})^{t} \),
        and $(\D,0)^{t}$.  Subdivide \( T \) into two triangles by
        the line \( x = 1 \) (see left of Figure
        \ref{fig:TriangRearrange}).  Let \( L \) be the
        ``one-third-open" triangle strictly to the left of the
        line, and let \( R \) be the closed triangle to the right.
        Thus we have
        \begin{align*}
            L 
            & = \conv \{%
                    (0, 0)^{t},
                    (1, 0)^{t}, 
                    (1,\tfrac{\D-1}{\D})^{t} 
                \}
                \; \backslash \;
                [(1, 0)^{t}, (1,\tfrac{\D-1}{\D})^{t}] \\
            R
            & = \conv \{
                    (1, 0)^{t},
                    (\D, 0)^{t},
                    (1,\tfrac{\D-1}{\D})^{t}
                \}
        \end{align*}
        Let \( U \) be the affine unimodular transformation \(
        \R^{2} \to \R^{2} \) defined by
        \[% 
            U(x)
            =
            \begin{bmatrix}
                \D-1 & -\D \\
                -1    & 1
            \end{bmatrix}
            x
            +
            \begin{bmatrix}
                1  \\
                1
            \end{bmatrix}.
        \]
        Then \( U(L) \) and \( R \) are disjoint, and their union
        is the \emph{integral} triangle \[ T' = \conv\{ (1,
        0)^{t}, (1, 1)^{t}, (\D, 0)^{t} \} \] (see right of Figure
        \ref{fig:TriangRearrange}).  By construction, \( \Ehr_{T'}
        = \Ehr_{T} \), and so, since \( T' \) is integral, \(
        \Ehr_{T} \) is a polynomial.
    \end{exm}
    
    The triangle in Example \ref{exm:MW05Triang} first appeared in
    \cite{MW05}, where it was used to establish the following
    theorem.
    
    \begin{thm}
    \label{thm:Ex}
        Given an integer $\D \ge 2$, there exists a polygon with
        denominator $\D$ whose Ehrhart quasi-polynomial is a
        polynomial.
    \end{thm}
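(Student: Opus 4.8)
The plan is to verify that the triangle $T$ constructed in Example~\ref{exm:MW05Triang} already supplies everything the theorem asks for, so that the argument reduces to checking one numerical fact and then quoting that example.

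First I would pin down the denominator of $T = \conv\{(0,0)^{t}, (1,\tfrac{\D-1}{\D})^{t}, (\D,0)^{t}\}$. A dilate $kT$ is integral precisely when $k\cdot\tfrac{\D-1}{\D} \in \Z$, i.e.\ when $\D \mid k(\D-1)$; since $\gcd(\D-1,\D)=1$, this forces $\D \mid k$, so the smallest such $k$ is $\D$ itself. Hence the denominator of $T$ equals $\D$, and in particular $T$ is genuinely non-integral for every $\D \ge 2$, so the statement is not vacuous.

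Next I would invoke the rearrangement from Example~\ref{exm:MW05Triang}: there is an affine unimodular transformation $U$ for which $U(L)$ and $R$ are disjoint with $U(L) \cup R = T' = \conv\{(1,0)^{t},(1,1)^{t},(\D,0)^{t}\}$, an integral triangle. Because $L$ and $R$ partition $T$ and affine unimodular maps preserve $\Z^{2}$, they preserve lattice-point counts in every positive integer dilate; thus $\Ehr_{T}(k) = \Ehr_{U(L)}(k) + \Ehr_{R}(k) = \Ehr_{T'}(k)$ for all $k \in \Z_{>0}$. By Theorem~\ref{thm:EhrhartQPs} the integral polytope $T'$ has polynomial Ehrhart quasi-polynomial, so $\Ehr_{T}$ is a polynomial, and taking the polygon in the statement to be $T$ finishes the proof.

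I do not expect a genuine obstacle here; the only points requiring care are the elementary observation that $\gcd(\D-1,\D)=1$ makes the denominator exactly $\D$, and the bookkeeping that the ``one-third-open'' triangle $L$ is defined precisely so that $L$ together with $R$ is an honest partition of the closed triangle $T$ (no double-counting along the cut $x=1$) and, correspondingly, that $U(L)$ together with $R$ is an honest partition of $T'$. Granting these points, additivity of the lattice-point count gives the result immediately.
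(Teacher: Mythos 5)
Your proposal is correct and follows essentially the same route as the paper, which proves Theorem~\ref{thm:Ex} precisely by exhibiting the triangle $T$ of Example~\ref{exm:MW05Triang} and the unimodular rearrangement of $L$ and $R$ into the integral triangle $T'$. The one thing you add beyond the paper's text is the explicit check, via $\gcd(\D-1,\D)=1$, that $T$ really has denominator $\D$; that is a worthwhile detail the paper leaves implicit.
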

    
    \ifpdfsyncstop
    \begin{figure}[tbp]
        \includegraphics{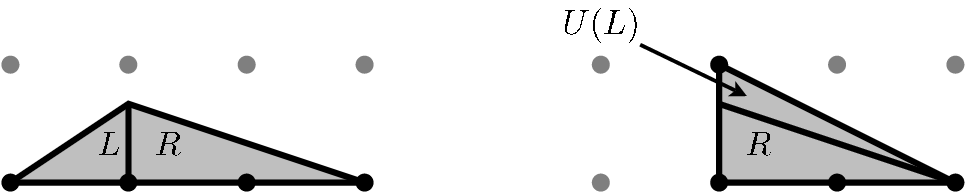}
        \caption{%
            Triangle \( T \) in the case \( \D = 3 \) on left, and
            \( \G \)-equi\-decom\-posable integral triangle on
            right
        }%
        \label{fig:TriangRearrange}
    \end{figure}
    \ifpdfsyncstart

    \begin{exm}
    \label{exm:StanleyPyramid}
        In \cite{Sta97}, Stanley gives an example of a \( 3
        \)-dimensional non-integral polyhedron with quasi-period
        1.  Let \( P \subset \R^{3} \) be the convex hull of the
        points \( (0, 0, 0)^{t} \), \( (1, 0, 0)^{t} \), \( (1, 1,
        0)^{t} \), \( (0, 1, 0)^{t} \), and \( (1/2, 0, 1/2)^{t}
        \).  This is the pyramid pictured on the left side of
        Figure \ref{fig:StanleyPyramid}.  To see that \( \Ehr_{P}
        \) is a polynomial, dissect \( P \) by the plane
        perpendicular to the vector \( w = (-1, 1, 1)^{t} \).  The
        intersection of this plane with \( P \) is indicated by
        the dark gray triangle in Figure \ref{fig:StanleyPyramid}.
        Let \( U \) be the unimodular transformation of \( \R^{3}
        \) whose matrix with respect to the standard basis is
        \begin{equation*}
            \begin{bmatrix*}[r]
                 1 &                             0 &  0  \\
                 1 &                             0 & -1  \\
                -1 & \makebox[\negone][r]{\( 1 \)} &  2
            \end{bmatrix*}.
        \end{equation*}
        Applying this transformation to the half-space \(
        \braces{x \in \R^{3} : w \cdot x \ge 0}\) maps \( P \) to
        the integral simplex on the right side of Figure
        \ref{fig:StanleyPyramid}.
    \end{exm}
    
    \ifpdfsyncstop
    \begin{figure}[tbp]
        \def\JPicScale{0.5}
        \includegraphics{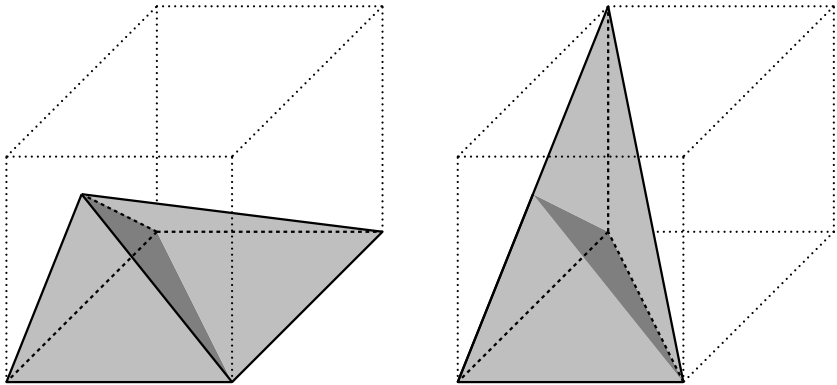}
        \caption{%
            Non-integral polyhedron and its integral image under
            piecewise unimodular transformation
        }%
        \label{fig:StanleyPyramid}
    \end{figure}
    \ifpdfsyncstart

    In the preceding examples, we showed that a non-integral
    polytope had a polynomial Ehrhart quasi-polynomial because it
    was, in some sense, a disguised integral polytope---it was an
    integral polytope up to rearrangement and unimodular
    transformation of its pieces.  One might be tempted to
    conjecture that all polytopes with polynomial Ehrhart
    quasi-polynomials are disguised integral polytopes in this
    sense.  In particular, this would imply that, for any rational
    polytope \( Q \), if \( \Ehr_{Q} \) is a polynomial, then \(
    \Ehr_{Q} = \Ehr_{P} \) for some \emph{integral} polytope \( P
    \).  However, this turns out not to be the case.  There exist
    Ehrhart polynomials that are not the Ehrhart polynomials of
    any integral polytope.

    \begin{exm}
    \label{exm:nonintqp}
        Let \( T \) be the triangle from Example
        \ref{exm:MW05Triang}, and let \( Q \) be the quadrilateral
        that results from the union of \( T \) with its reflection
        about the \( x \)-axis.  Then \( \Ehr_{Q}(k) = 2
        \Ehr_{T}(k) - \D k - 1 \) (correcting for the
        double-counting of the points on the \( x \)-axis).
        Hence, \( \Ehr_{Q} \) is also a polynomial.  Yet we claim
        that \( \Ehr_{Q} \) is not the Ehrhart polynomial of any
        integral polygon.  This is because \( Q \) has only two
        lattice points on its boundary, so, by \cite[Theorem
        3.1]{MW05}, the coefficient of the linear term of \(
        \Ehr_{Q} \) is 1.  But any integral polygon \( P \) has at
        least three lattice points on its boundary, so, by Pick's
        theorem, the coefficient of the linear term of \( \Ehr_{P}
        \) is at least \( 3/2 \).
    \end{exm}

    \section{Conjectures}
    \label{sec:Conjectures}

    As seen in the example concluding the previous section, a
    polytope \( P \) may have quasi-period 1 and yet not be the
    result of rearranging unimodular images of the pieces of an
    integral polytope.  Therefore, a more flexible formulation of
    the process carried out in the preceding examples is necessary
    if we hope to find a general explanation for the phenomenon of
    quasi-period collapse.

    To this end, recall that a \emph{simplex} is the convex hull
    of a finite set of affinely independent points.  An \emph{open
    simplex} is the interior of a simplex with respect to the
    affine subspace that it spans.  We call an open simplex
    \emph{integral} if its closure is integral.  The function \(
    \Ehr_{S} \) counting the lattice points in integral dilations
    of a \( d \)-dimensional open simplex \( S \) satisfies a
    well-known reciprocity property: \( \Ehr_{S}(k) = (-1)^{d}
    \Ehr_{\bar{S}} (-k)\), where \( \bar{S} \) is the closure of
    \( S \) \cite{Ehr67}.  In particular, if \( S \) is an
    integral open simplex, then \( \Ehr_{S}(k) \) is a polynomial
    function of \( k \).

    \begin{exm:nonintqpCont}
        The quadrilateral \( Q \) is a disjoint union of \( T \)
        and the reflection about the \( x \)-axis of those points
        in \( T \) strictly above the \( x \)-axis.  As in Example
        \ref{exm:MW05Triang}, each of these two sets may in turn
        be partitioned into open simplices that, under suitable
        rearrangement by unimodular transformations, form a
        disjoint union of integral open simplices.
    \end{exm:nonintqpCont}

    Let \( \Aff_n(\Z) \cong \G \ltimes \Z^{n} \) be the group of
    affine unimodular transformations on \( \R^{n} \).  To make
    the process employed above precise, we define the notion of \(
    \G \)-equidecomposability.  This definition first appeared
    in~\cite[\S3.1]{Kan98}; it is analogous to the classical
    Euclidean notion of equidecomposability (see, e.g.,
    \cite[Chapter 7]{AZ04}).

    \begin{defn}
    \label{defn:G-Equidecomposability}
        We say that two subsets $P, Q \subset \R^n$ are \emph{\(
        \G \)-equi\-de\-com\-posable} if there are open simplices
        \( T_{1}, \dotsc, T_{r} \) and affine unimodular
        transformations \( U_{1}, \dotsc, U_{r} \in \Aff_n(\Z) \)
        such that
        \begin{equation*}
            P = \coprod_{i=1}^{r} T_{i}
            \quad \text{and} \quad
            Q = \coprod_{i=1}^{r} U_{i}(T_{i}).
        \end{equation*}
        (Here, \( \coprod \) indicates disjoint union.)
    \end{defn}
    
    \begin{conj}
    \label{conj:EhrhartPolysIffUnionOfIntegral}
        Suppose that \( P \) is a rational polytope with
        quasi-period 1.  Then there exists a disjoint union \( Q
        \) of integral open simplices such that \( P \) and \( Q
        \) are \( \G \)-equidecomposable.
    \end{conj}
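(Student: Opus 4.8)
\emph{Toward a proof.} This being the central conjecture of the paper, what follows is a proposed line of attack rather than a complete argument. The plan is to phrase \( \G \)-equidecomposability inside a lattice analogue of the scissors congruence group and to reduce the conjecture to the injectivity of a Dehn-like invariant on that group.

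Let \( \mathcal{S} \) be the free abelian group on the relatively open bounded rational simplices of \( \R^{n} \), modulo the inclusion--exclusion relations \( [S] = [S'] + [S''] + [S \cap H] \) obtained by cutting an open simplex \( S \) by a hyperplane \( H \), together with the relations \( [S] = [U(S)] \) for \( U \in \Aff_{n}(\Z) \). Writing \( [P] \) for the class of an arbitrary decomposition of a rational polytope \( P \) into open simplices, a Banach--Schr\"oder--Bernstein argument---the lattice version of ``equicomplementable implies equidecomposable''---should show that \( P \) and \( Q \) are \( \G \)-equidecomposable exactly when \( [P] = [Q] \) in \( \mathcal{S} \). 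Since lattice-point counting \( S \mapsto \Ehr_{S} \) is invariant under \( \Aff_{n}(\Z) \) and additive in the above sense, it descends to a homomorphism \( \mathcal{E} \colon \mathcal{S} \to \mathcal{Q} \), where \( \mathcal{Q} \) is the group of quasi-polynomials \( \Z \to \Q \). Let \( \mathcal{I} \le \mathcal{S} \) be generated by the classes of integral open simplices. By the reciprocity property recalled above, \( \mathcal{E} \) maps \( \mathcal{I} \) into the subgroup \( \mathcal{Q}_{0} \le \mathcal{Q} \) of honest polynomials; in fact, because the open standard \( d \)-simplex \( \Delta_{d}^{\circ} \) has Ehrhart polynomial \( \binom{k-1}{d} \) and the polynomials \( \binom{k-1}{d} \), \( 0 \le d \le n \), form a \( \Z \)-basis of the integer-valued polynomials of degree at most \( n \), one gets \( \mathcal{E}(\mathcal{I}) = \mathcal{Q}_{0} \). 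Consequently Conjecture~\ref{conj:EhrhartPolysIffUnionOfIntegral} would follow from the identity \( \mathcal{E}^{-1}(\mathcal{Q}_{0}) = \mathcal{I} \), provided one also verifies the positivity statement that a class in \( \mathcal{I} \) represented by an honest rational polytope is represented by an honest disjoint union of integral open simplices.

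Granting this reformulation, the argument I envisage goes as follows. Given \( P \) with \( \Ehr_{P} \) a polynomial, expand \( \Ehr_{P} = \sum_{d=0}^{n} a_{d} \binom{k-1}{d} \) with integers \( a_{d} \) (possible since \( \Ehr_{P} \), being a polynomial of degree at most \( n \) that is integral on the positive integers, is integer-valued); this names the candidate target, namely \( a_{d} \) disjoint copies of \( \Delta_{d}^{\circ} \) for each \( d \) with \( a_{d} > 0 \), the \( d \) with \( a_{d} < 0 \) being absorbed below. The core step is to establish \( [P] = \sum_{d} a_{d} [\Delta_{d}^{\circ}] \) in \( \mathcal{S} \), and here I would induct on the denominator \( \D \) of \( P \). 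When \( \D = 1 \), triangulate \( P \) without new vertices and decompose into relatively open faces to see \( [P] \in \mathcal{I} \), then read off the \( a_{d} \) from \( \mathcal{E}([P]) \). When \( \D > 1 \), imitate Examples~\ref{exm:MW05Triang} and~\ref{exm:StanleyPyramid}: slice \( P \) along hyperplanes parallel to its non-integral facets and apply unimodular shears straightening those facets, rewriting \( [P] \) as \( \sum_{j} [P_{j}] \) with every \( P_{j} \) of strictly smaller denominator. Finally, to convert the resulting group identity into an honest \( \G \)-equidecomposition, push the negative terms to the other side, realize both sides as disjoint unions sitting in disjoint regions of \( \R^{n} \), invoke the Banach--Schr\"oder--Bernstein argument again, and cancel the common integral summand---using the cancellation law \( X \sqcup A \equiv Y \sqcup A \Rightarrow X \equiv Y \), which I expect holds by the same device.

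The main obstacle is the inductive step when \( \D > 1 \): an individual piece \( P_{j} \) produced by slicing and shearing will in general \emph{not} have a polynomial Ehrhart quasi-polynomial, so the induction cannot be run piece by piece but must be carried along the entire formal sum \( \sum_{j} [P_{j}] \), keeping it inside \( \mathcal{E}^{-1}(\mathcal{Q}_{0}) \) while some measure of ``denominator complexity'' strictly decreases. Equivalently, one must exhibit a \emph{global} geometric mechanism---a prescribed cascade of subdivisions and affine unimodular moves---that witnesses the mutual cancellation of the periodic parts of the \( \Ehr_{P_{j}} \), and it is far from clear that polynomiality of the total count alone rigidifies matters enough to select those moves. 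At bottom this is the problem of computing \( \mathcal{S}/\mathcal{I} \) and showing that \( \mathcal{E} \) embeds it into \( \mathcal{Q}/\mathcal{Q}_{0} \), i.e., of finding a complete system of Dehn-like invariants for \( \G \)-equidecomposability---exactly the program sketched in the introduction. Even in dimension \( 2 \) this would require a clean description of which periodic defects of rational triangles can be matched to one another by affine unimodular transformations.
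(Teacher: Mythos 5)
This is a conjecture in the paper; the authors offer no proof, and at the time of writing it is open. There is therefore no argument in the paper to compare your sketch against, and you are right to frame what you wrote as a line of attack rather than a proof. What can usefully be said is how your proposed route relates to the paper's own discussion and where the genuine gaps are.

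Your framework---a scissors-congruence group \( \mathcal{S} \) of relatively open rational simplices modulo subdivision and \( \Aff_n(\Z) \)-relations, the Ehrhart homomorphism \( \mathcal{E} \), and the subgroup \( \mathcal{I} \) of integral classes---is exactly the machinery the authors themselves call for in Section~\ref{sec:GLZ-ScissorsCongruence}, and your reduction of the conjecture to \( \mathcal{E}^{-1}(\mathcal{Q}_0) = \mathcal{I} \) plus a realizability lemma is a clean way to phrase their program of finding a complete system of Dehn-like invariants. Your computation that the open standard \( d \)-simplex has Ehrhart polynomial \( \binom{k-1}{d} \) and that these form a \( \Z \)-basis of the integer-valued polynomials of degree at most \( n \), hence \( \mathcal{E}(\mathcal{I}) = \mathcal{Q}_0 \), is correct and does pin down the target in \( \mathcal{I} \). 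So far this is a faithful, if informal, formalization of the authors' motivation.

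Two gaps, however, deserve stronger language than ``should show.'' First, the bridge between the group identity \( [P] = [Q] \) in \( \mathcal{S} \) and the paper's Definition~\ref{defn:G-Equidecomposability} is not a black box. In classical Euclidean scissors congruence that bridge is Zylev's theorem (equicomplementability implies equidecomposability), whose proof exploits the continuous group of isometries to put pieces in general position. Here the group \( \Aff_n(\Z) \) is discrete, and the paper's definition is unusually strict: an exact partition of each of \( P \) and \( Q \) into relatively open simplices matched bijectively by elements \( U_i \in \Aff_n(\Z) \), not an almost-disjoint cover up to measure zero. No Zylev-type theorem is cited or known to me in this setting, yet you invoke it twice---to cancel common summands and to convert the formal identity \( [P] = \sum_d a_d [\Delta_d^{\circ}] \) (with possibly negative \( a_d \)) into an honest disjoint union. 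Without such a theorem your reformulation is only an implication in one direction, and establishing it is a nontrivial sub-problem in its own right.

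Second, as you yourself acknowledge, the inductive step on the denominator is the crux, and the difficulty there is structural rather than technical. Slicing and shearing to lower \( \D \) gives no local control: polynomiality of \( \Ehr_P \) says nothing about the periodic defects of the individual \( \Ehr_{P_j} \), so there is no monovariant driving the induction. What is needed is a global invariant whose vanishing characterizes quasi-period 1 and which is demonstrably annihilated by your moves---precisely the \( \G \)-Dehn invariant the authors leave as an open problem in Section~\ref{sec:GLZ-ScissorsCongruence}. Your sketch thus lands, correctly, on the same open problem the paper does; it does not close it, and you are right to flag that polynomiality of the total count alone is not obviously enough rigidity to select the required cascade of subdivisions and unimodular moves.
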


    Conjecture \ref{conj:EhrhartPolysIffUnionOfIntegral} has a
    natural generalization to polytopes whose quasi-periods
    collapse to values larger than \( 1 \): if \( P \) has minimum
    quasi-period \( N \), we conjecture that \( P \) is \( \G
    \)-equidecomposable with a disjoint union of open simplices
    whose denominators are at most \( N \).

    The decompositions employed in Examples \ref{exm:MW05Triang}
    and \ref{exm:StanleyPyramid} were reasonably easy to find.
    However, a systematic method of finding such decompositions is
    obviously desirable if we hope to extend this approach to a
    general technique for proving polynomiality of Ehrhart
    quasi-polynomials.
    
    \begin{openproblem}
       Find a systematic and useful technique that, given a
       rational polytope \( P \) that is \( \G \)-equidecomposable
       with some integral polytope \( Q \), produces a
       decomposition \( \{T_{i}\} \) of \( P \) and a set of
       unimodular maps \( \{U_{i}\} \) as in Definition
       \ref{defn:G-Equidecomposability}.
    \end{openproblem}
    
    \section{%
        \texorpdfstring{%
            \( \G \)%
        }{%
            GL\_n(Z)%
        }-Scissors Congruence
    }
    \label{sec:GLZ-ScissorsCongruence}

    Another phenomenon that appeared in the examples from Section
    \ref{exm:MW05Triang} was the equality of the Ehrhart
    quasi-polynomials of two distinct polytopes.  We say that two
    rational polytopes \( P \) and \( Q \) are \emph{Ehrhart
    equivalent} if and only if their Ehrhart quasi-polynomials are
    equal.  Obviously, any two \( \G \)-equidecomposable polytopes
    are Ehrhart equivalent.  But what about the converse?  Suppose
    a rational polytope $Q$ has the same Ehrhart quasi-polynomial
    as a polytope $P$.  Are $P$ and $Q$ $\G$-equidecomposable?
    
    The answer is known to be ``yes'' in the case \( d=2 \)
    \cite[Theorem 1.3]{Gre93}.  An analogy with the scissors
    congruence problem suggests that this is no longer the case
    for \( d \ge 3 \).  Nonetheless, as we prove below, a weak
    version of the converse direction does hold (Proposition
    \ref{prop:EhrEquivImpliesWeaklyEquidecomp}).  We also propose
    an ansatz for a $\G$-Dehn invariant, based on a theorem for
    reflexive polygons.

    \begin{question}
    \label{question:EhrEquivIffEquidecomp}
        Are Ehr\-hart-equi\-va\-lent rational polytopes always
        $\G$-equi\-decom\-posable?
    \end{question}
    
    \subsection{Weak $\G$-scissors congruence}
    
    If we allow more general translations of the pieces in a
    decomposition of \( P \), we get weak scissors congruences.

    \begin{defn}
        Two rational polytopes $P, Q \subset \R^n$ are
        \emph{weakly $\G$-equi\-de\-com\-posable} if they can be
        decomposed into rational polytopes $P_1, \ldots, P_r$ and
        $Q_1, \ldots, Q_r$, respectively, such that $P_i$ is
        equivalent to $Q_i$ via $\G \ltimes \Q^n$.
    \end{defn}

    This is equivalent to saying that there is a factor $k \in
    \Z_{>0}$ such that $kP$ and $kQ$ are (ordinarily)
    $\G$-equidecomposable.

    Observe that the weak version of $\G$-equidecomposability does
    not imply that the Ehrhart quasi-poly\-no\-mi\-als agree
    everywhere.  Nonetheless, they will agree at infinitely many
    arguments.  Therefore, if two \emph{integral} polytopes are
    weakly $\G$-equi\-de\-com\-posable, then they must be Ehrhart
    equivalent.

    \begin{prop}
    \label{prop:EhrEquivImpliesWeaklyEquidecomp}
        Let $P$ and $Q$ be Ehrhart-equivalent rational polytopes.
        Then $P$ and $Q$ are weakly $\G$-equidecomposable.
    \end{prop}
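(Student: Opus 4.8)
The plan is to reduce, by clearing denominators, to the case of two Ehrhart-equivalent \emph{integral} polytopes, then to dilate both by one common factor chosen so that each acquires a unimodular triangulation; once that is done, the equality of the Ehrhart polynomials forces the two triangulations to match up to affine unimodular moves.

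First I would pick a common denominator $D$ of $P$ and $Q$, so that $DP$ and $DQ$ are integral and $\Ehr_{DP}(k) = \Ehr_{P}(Dk) = \Ehr_{Q}(Dk) = \Ehr_{DQ}(k)$; thus $DP$ and $DQ$ are Ehrhart equivalent, and if they are shown to be weakly $\G$-equidecomposable then so are $P$ and $Q$. (Here one uses that $\G$-equidecomposability is preserved under dilation: if $U(x)=Mx+b$ with $M\in\G$ and $b\in\Z^{n}$, then $U(jT)=M(jT)+jb$ and $x\mapsto Mx+jb$ again lies in $\Aff_{n}(\Z)$.) So assume $P,Q$ integral. Since their vertices are now lattice points, a further element of $\Aff_{n}(\Z)$ lets us assume that $P$ and $Q$ are full-dimensional in a coordinate subspace $\R^{d}\subseteq\R^{n}$ carrying the lattice $\Z^{d}$, where $d=\dim P=\dim Q$ is the common degree of $\Ehr_{P}=\Ehr_{Q}$.

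The key step invokes the theorem of Knudsen, Mumford, and Waterman: there is a positive integer $m$ such that both $mP$ and $mQ$ admit unimodular triangulations. (If one applies that theorem to $P$ and $Q$ separately and then passes to a common multiple of the two factors, one also needs that a dilation $j\Delta$ of a unimodular simplex has a unimodular triangulation --- it is an alcoved polytope --- so that a unimodular triangulation of one dilate yields one of every further dilate.) Fix such an $m$. Turning a unimodular triangulation of $mP$ into a half-open decomposition, and then splitting each half-open unimodular $d$-simplex into the relative interiors of the faces it retains, we write $mP=\coprod_{k=0}^{d}\coprod_{i=1}^{a_{k}}\sigma_{k,i}^{\circ}$ as a disjoint union of unimodular \emph{open} simplices, $a_{k}$ of them of dimension $k$; do the same for $mQ$, with counts $b_{k}$. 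By the reciprocity law for open simplices recalled above, a unimodular open $k$-simplex has Ehrhart quasi-polynomial $\binom{t-1}{k}$, so
\[
 \sum_{k=0}^{d} a_{k}\binom{t-1}{k} = \Ehr_{mP}(t) = \Ehr_{P}(mt) = \Ehr_{Q}(mt) = \Ehr_{mQ}(t) = \sum_{k=0}^{d} b_{k}\binom{t-1}{k}.
\]
The polynomials $\binom{t-1}{k}$, $0\le k\le d$, are linearly independent, so $a_{k}=b_{k}$ for all $k$. Finally, any two unimodular open $k$-simplices are interchanged by an element of $\Aff_{n}(\Z)$, so matching the $\sigma_{k,i}^{\circ}$ for $mP$ with those for $mQ$ dimension by dimension exhibits $mP$ and $mQ$ as $\G$-equidecomposable; undoing the reductions gives the proposition.

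The hard part is the existence of the common dilation factor $m$ --- this is exactly (and only) the Knudsen--Mumford--Waterman theorem, which is genuinely nontrivial. Everything after that is forced \emph{for free} by the hypothesis $\Ehr_{P}=\Ehr_{Q}$: the two triangulations a priori have unrelated face numbers, but the linear independence of the binomial polynomials converts the equality of Ehrhart functions into the equalities $a_{k}=b_{k}$ needed to pair the pieces. The remaining ingredients are routine: normalizing $\operatorname{aff}(P)$ by an element of $\Aff_{n}(\Z)$ (a saturated sublattice is a direct summand), the existence of half-open decompositions of triangulations (a shelling-type argument), and the value $\binom{t-1}{k}$ of the Ehrhart quasi-polynomial of an open unimodular $k$-simplex.
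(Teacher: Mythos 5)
Your proof is correct and follows essentially the same route as the paper: invoke the Knudsen--Mumford triangulation theorem from \cite{KKMSD73} to get a common dilate with unimodular triangulations, observe that the Ehrhart polynomial forces the face counts to agree, and pair up the unimodular pieces by dimension. The only real difference is that the paper simply cites \cite[Cor.\ 2.5]{Sta80} for the fact that the Ehrhart polynomial determines the $f$-vector of a unimodular triangulation, whereas you prove that step directly via half-open decompositions and the linear independence of the polynomials $\binom{t-1}{k}$ (and, incidentally, the theorem you want is due to Knudsen--Mumford, not Knudsen--Mumford--Waterman).
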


    \begin{cor}
        Two integral polytopes are Ehrhart equivalent if and only
        if they are weakly $\G$-equidecomposable.
    \end{cor}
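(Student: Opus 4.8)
The plan is to obtain the two implications almost immediately from results already established. For the ``only if'' direction, suppose \( P \) and \( Q \) are Ehrhart-equivalent integral polytopes. Integral polytopes are in particular rational, so Proposition \ref{prop:EhrEquivImpliesWeaklyEquidecomp} applies verbatim and shows that \( P \) and \( Q \) are weakly \( \G \)-equidecomposable. No new argument is needed here.

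For the ``if'' direction, suppose \( P \) and \( Q \) are weakly \( \G \)-equidecomposable. By the remark following the definition of weak \( \G \)-equidecomposability, there is a \( k \in \Z_{>0} \) such that \( kP \) and \( kQ \) are (ordinarily) \( \G \)-equidecomposable. As observed at the start of Section \ref{sec:GLZ-ScissorsCongruence}, \( \G \)-equidecomposable polytopes are Ehrhart equivalent: if \( kP = \coprod_i T_i \) and \( kQ = \coprod_i U_i(T_i) \) with \( U_i(x) = A_i x + b_i \), \( A_i \in \G \), \( b_i \in \Z^n \), then for each \( m \in \Z_{>0} \) the affine unimodular map \( x \mapsto A_i x + m b_i \) is a bijection of \( \Z^n \) carrying \( m T_i \) onto \( m U_i(T_i) \); summing lattice-point counts over the pieces gives \( \Ehr_{kP} = \Ehr_{kQ} \). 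Hence \( \Ehr_{P}(km) = \Ehr_{kP}(m) = \Ehr_{kQ}(m) = \Ehr_{Q}(km) \) for every \( m \in \Z_{>0} \).

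The remaining step is to upgrade agreement on the arithmetic progression \( \braces{km : m \in \Z_{>0}} \) to agreement everywhere, and this is where integrality (not just rationality) enters: by Theorem \ref{thm:EhrhartQPs}, \( \Ehr_{P} \) and \( \Ehr_{Q} \) are genuine polynomials in \( k \), since integral polytopes have quasi-period \( 1 \). Two polynomials that agree at infinitely many arguments are equal, so \( \Ehr_{P} = \Ehr_{Q} \) and \( P \), \( Q \) are Ehrhart equivalent. I do not expect a serious obstacle in this argument; the only points demanding a little care are the compatibility of dilation with the translation part of a unimodular map in the first step, and the appeal to integrality in the last step — without it one would only learn that \( \Ehr_P \) and \( \Ehr_Q \) agree on a sublattice of arguments, which is strictly weaker than equality of quasi-polynomials.
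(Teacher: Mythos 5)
Your proof is correct and follows the same route as the paper: the ``only if'' direction is Proposition \ref{prop:EhrEquivImpliesWeaklyEquidecomp} applied verbatim, and the ``if'' direction is precisely the observation preceding the proposition (agreement along the progression $\{km\}$ together with integrality forcing $\Ehr_P$ and $\Ehr_Q$ to be polynomials, hence equal). You have merely made explicit the verification, left implicit in the paper, that dilation commutes with the affine parts of the unimodular maps.
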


    \begin{proof}[%
        Proof of Proposition
        \ref{prop:EhrEquivImpliesWeaklyEquidecomp}%
    ]%
        By a famous theorem of Kempf et al., there is a positive
        integer \( N \) such that \( NP \) and \( NQ \) are both
        integral and admit unimodular triangulations---i.e.,
        triangulations whose simplices are $\Aff_n(\Z)$-equivalent
        to the standard simplex \cite{KKMSD73}.  It is well known
        that the Ehrhart polynomial of a polytope determines the
        $f$-vector of a unimodular triangulation of that polytope
        (see, e.g., \cite[Corollary 2.5]{Sta80}).  Hence, the
        triangulations of \( NP \) and \( NQ \) have the same
        $f$-vector, and all simplices of a given dimension are
        equivalent under $\Aff_n(\Z)$.  Therefore, the
        corresponding simplices of \( P \) and \( Q \) are
        equivalent under \( \G \ltimes \Q^d \).  The claim
        follows.
    \end{proof}
    
    \subsection{A $\G$-Dehn invariant?}
    
    For the classical scissors congruence problem in three
    dimensions, one uses rigid motions rather than lattice
    preserving transformations.  The volume and the Dehn invariant
    \begin{equation*}
        \operatorname{Dehn}(P) \ 
        = \ \sum_{\text{\( e \) an edge of \( P \)}}
            \text{length}(e)
            \otimes
            \text{angle}(e)
        \quad \in \quad
            \R
            \otimes_\Z
            \R / \Z \pi
    \end{equation*}
    provide a complete set of invariants.  That is,
    $3$-dimensional polytopes $P$ and $Q$ are scissors congruent
    if and only if they have the same volume and the same Dehn
    invariant.  The ``only if'' part is relatively easy to see
    (see~\cite[Chapter 7]{AZ04}), because the Dehn invariant is
    additive, and decompositions of polyhedra satisfy the
    following two properties.
    \begin{itemize}
        \item[$(\pi)$] 
        A decomposition edge through a two-dimensional face
        contributes an angle of $\pi$, so it does not contribute
        to the Dehn invariant.
        
        \item[$(2\pi)$] 
        A decomposition edge through the interior contributes an
        angle of $2\pi$, so it does not contribute to the Dehn
        invariant.
    \end{itemize}
    \begin{prob}
        Can we manufacture a Dehn-like invariant in the $\G$ case?
    \end{prob}
    This invariant, once constructed, will likely be more
    appropriate to detecting when two lattice polytopes are
    $\G$-equidecomposable into lattice polytopes, in particular,
    when unimodular triangulations exist.

    The role of the full circle $2\pi$ should be played by the
    ``12'' of Poonen and Rodriguez-Villegas~\cite{PRV00}.
    \begin{thm}
        The sum of the lengths of a reflexive polygon and its dual
        is $12$.
    \end{thm}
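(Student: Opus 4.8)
The plan is to read the two lattice perimeters off the boundary combinatorics of $P$, encode ``going once around $\partial P$'' as a relation in $\operatorname{SL}_2(\Z)$, and extract the constant $12$ from that relation via the Rademacher function; this is close to the route of Poonen and Rodriguez-Villegas~\cite{PRV00}. To set up: list the vertices $p_1,\dots,p_n$ of $P$ counterclockwise (indices mod $n$) and let $u_i$ be the primitive integer vector with $p_{i+1}-p_i=\ell_i u_i$, so $\ell_i$ is the lattice length of the edge $[p_i,p_{i+1}]$ and $\sum_i\ell_i=\#(\partial P\cap\Z^2)$ is the length of $P$. Reflexivity enters twice. First, every edge of $P$ lies at lattice distance $1$ from the origin; hence each vertex $p_i$ is primitive (it lies on an edge dual to a \emph{lattice} vertex of $P^*$) and, with the counterclockwise labeling, $\det(p_i,u_{i-1})=\det(p_i,u_i)=1$. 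Second, writing $\ell_i^*$ for the lattice length of the edge of $P^*$ dual to the vertex $p_i$ (so that $\sum_i\ell_i^*=\#(\partial P^*\cap\Z^2)$ is the length of $P^*$), a short computation using that $p_i$ lies on two edges of $P$, together with an orientation check, gives the corner relation
\[
    u_{i-1}=u_i+\ell_i^*\,p_i .
\]

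Next I would form, for each $i$, the integer matrix $M_i$ with columns $p_i$ and $u_{i-1}$; the determinant computation above puts $M_i\in\operatorname{SL}_2(\Z)$. Rewriting the columns of $M_{i+1}$ using $p_{i+1}=p_i+\ell_i u_i$ and the corner relation yields $M_{i+1}=M_iN_i$ with
\[
    N_i=\begin{pmatrix}1-\ell_i\ell_i^*&-\ell_i^*\\ \ell_i&1\end{pmatrix}=U^{-\ell_i^*}L^{\ell_i},
    \qquad
    U=\begin{pmatrix}1&1\\0&1\end{pmatrix},\quad L=\begin{pmatrix}1&0\\1&1\end{pmatrix}.
\]
Since $M_{n+1}=M_1$, traversing $\partial P$ once gives the relation $N_1N_2\cdots N_n=I$ in $\operatorname{SL}_2(\Z)$.

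Now apply the Rademacher function $\Phi\colon\operatorname{SL}_2(\Z)\to\Z$ (from the transformation law of $\log\eta$): it has $\Phi(U^a)=a$ and $\Phi(L^b)=3-b$ for $b\ge1$, it is additive across a product one of whose factors is upper triangular---so that $\Phi(N_i)=3-\ell_i-\ell_i^*$---and it satisfies in general the cocycle identity $\Phi(AB)=\Phi(A)+\Phi(B)-3\operatorname{sign}\bigl(c(A)c(B)c(AB)\bigr)$, where $c(\cdot)$ denotes the lower-left entry. Telescoping the cocycle around $N_1\cdots N_n=I$, and using $c(N_i)=\ell_i>0$, gives
\[
    0=\Phi(I)=\sum_{i}\Phi(N_i)-3\sum_{i}\operatorname{sign}\bigl(c(N_1\cdots N_{i-1})\,c(N_1\cdots N_i)\bigr).
\]
A direct computation identifies $c(N_1\cdots N_j)$ with $\det(p_1,p_{j+1})$; as $j$ runs from $0$ to $n$ the vertex $p_{j+1}$ winds exactly once counterclockwise about the origin---this is convexity of $P$ together with $0\in\intr P$---so $\operatorname{sign}\det(p_1,p_{j+1})$ changes sign exactly once and the second sum collapses to $n-4$. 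Substituting $\sum_i\Phi(N_i)=3n-\sum_i(\ell_i+\ell_i^*)$ yields $0=3n-\sum_i(\ell_i+\ell_i^*)-3(n-4)$, that is, $\sum_i\ell_i+\sum_i\ell_i^*=12$.

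The matrix bookkeeping in the first three steps is routine; the real content is obtaining the exact value rather than merely a congruence. Indeed, abelianizing $N_1\cdots N_n=I$ already gives $\sum_i(\ell_i+\ell_i^*)\equiv0\pmod{12}$ for free, since $\operatorname{SL}_2(\Z)^{\mathrm{ab}}\cong\Z/12$; the delicate point is pinning down the winding/sign term---equivalently, computing a translation number in the universal cover $\widetilde{\operatorname{SL}_2(\R)}$---to promote this to an equality. A less computational alternative is to combine that congruence with Scott's inequality $\#(\partial P\cap\Z^2),\,\#(\partial P^*\cap\Z^2)\le9$ for lattice polygons with a single interior lattice point: the sum is then a positive multiple of $12$ that is at most $18$, hence equals $12$.
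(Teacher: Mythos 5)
The paper does not prove this theorem; it invokes it as a known result of Poonen and Rodriguez-Villegas~\cite{PRV00}, so there is no in-paper argument to compare against. What you have written is, in substance, the $\operatorname{SL}_2(\Z)$/Rademacher-function proof from~\cite{PRV00}, and the details check out: reflexivity gives a lattice inner normal $a$ with $\langle a, p_i\rangle = -1$ for each of the two edges at $p_i$, so $p_i$ is primitive and $\det(p_i,u_{i-1})=\det(p_i,u_i)=1$, whence $u_{i-1}-u_i\in\Z\,p_i$ with the multiplier identified as $\ell_i^*$; the transfer matrix $N_i=U^{-\ell_i^*}L^{\ell_i}$ is computed correctly; the Rademacher values $\Phi(U^a)=a$, $\Phi(L^b)=3-b$ and the $3\operatorname{sign}(c\,c\,c)$ cocycle are standard; and since $c(N_1\cdots N_j)=\det(p_1,p_{j+1})$ changes sign exactly once (possibly passing through a single zero when some $p_k=-p_1$) as $j$ runs from $0$ to $n$, the correction sum is $n-4$ in either case, so that $0=\bigl(3n-\sum_i(\ell_i+\ell_i^*)\bigr)-3(n-4)$ gives $\sum_i(\ell_i+\ell_i^*)=12$. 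One thing to flag: you invoke convexity of $P$ and $0\in\intr P$ to get the winding/sign count, but the theorem as stated in the paper is meant to cover non-convex reflexive polygons as well, where both the dual and the winding argument need the more delicate framework of~\cite{PRV00,HS04}; as written, your argument establishes the convex case only. Your closing shortcut---reduce $N_1\cdots N_n=I$ modulo $\operatorname{SL}_2(\Z)^{\mathrm{ab}}\cong\Z/12$ to get $12\mid\sum_i(\ell_i+\ell_i^*)$, then cap the sum at $18$ via Scott's bound $B\le 9$ for a convex lattice polygon with a single interior point---is a correct and pleasantly elementary alternative, again restricted to the convex case.
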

    Here, a lattice polygon is reflexive if it contains a unique
    interior lattice point, and the length is measured with
    respect to the lattice.  The polygon does not need to be
    convex.  In the non-convex case, the definition of the dual is
    a little harder~\cite{PRV00, HS04}.  Around a subdivision
    edge, we see a polygon with a distinguished interior
    point---the projection of the edge (see Figure
    \ref{fig:subdEdge}).
    
    \ifpdfsyncstop
    \begin{figure}[thb]
        \centering
        \includegraphics[width=40mm]{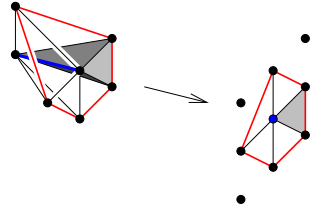}
        \caption{Projecting a subdivision edge}
        \label{fig:subdEdge}
    \end{figure}
    \ifpdfsyncstart
    
    This gives rise in a canonical way to a (possibly non-convex)
    reflexive polygon.  So we could mimic property $(2\pi)$ of the
    Dehn invariant by mapping to $\Z/12$.  Is there a way to
    incorporate the property $(\pi)$?
    
%     \bibliographystyle{hamsalphaf}
%     \bibliography{MasterBibliography}

\begin{thebibliography}{KKMSD73}

\bibitem[AZ04]{AZ04}
M.~Aigner and G.~M. Ziegler, \emph{Proofs from {T}he {B}ook}, third ed.,
  Springer-Verlag, Berlin, 2004, Including illustrations by Karl H. Hofmann.

\bibitem[DLM04]{DLM04}
J.~A. De~Loera and T.~B. McAllister, \emph{Vertices of {G}elfand-{T}setlin
  polytopes}, Discrete Comput. Geom. \textbf{32} (2004), no.~4, 459--470,
  \mbox{arXiv:math.CO/0309329}.

\bibitem[DLM06]{DLM06}
\bysame, \emph{On the computation of {C}lebsch-{G}ordan coefficients and the
  dilation effect}, Experiment. Math. \textbf{15} (2006), no.~1, 7--19.

\bibitem[DW02]{DW02}
H.~Derksen and J.~Weyman, \emph{On the {L}ittlewood-{R}ichardson polynomials},
  J. Algebra \textbf{255} (2002), no.~2, 247--257.

\bibitem[Ehr62]{Ehr62}
E.~Ehrhart, \emph{Sur les poly\`edres homoth\'etiques bord\'es \`a {$n$}
  dimensions}, C. R. Acad. Sci. Paris \textbf{254} (1962), 988--990.

\bibitem[Ehr67]{Ehr67}
\bysame, \emph{D\'emonstration de la loi de r\'eciprocit\'e du poly\`edre
  rationnel}, C. R. Acad. Sci. Paris S\'er. A-B \textbf{265} (1967), A91--A94.

\bibitem[Gre93]{Gre93}
P.~Greenberg, \emph{Piecewise {${\rm SL}\sb 2{\bf Z}$} geometry}, Trans. Amer.
  Math. Soc. \textbf{335} (1993), no.~2, 705--720.

\bibitem[HS04]{HS04}
C.~Haase and J.~Schicho, \emph{{Lattice polygons and the number 2i+7}},
  Preprint DUKE-CGTP-04-06, 2004, \mbox{arXiv:math.CO/0406224}, 17 pages,
  submitted, 2004.

\bibitem[Kan98]{Kan98}
J.-M. Kantor, \emph{Triangulations of integral polytopes and {E}hrhart
  polynomials}, Beitr\"age Algebra Geom. \textbf{39} (1998), no.~1, 205--218.

\bibitem[KKMSD73]{KKMSD73}
G.~Kempf, F.~F. Knudsen, D.~Mumford, and B.~Saint-Donat, \emph{Toroidal
  embeddings. {I}}, Springer-Verlag, Berlin, 1973, Lecture Notes in
  Mathematics, Vol. 339.

\bibitem[KR86]{KR86}
A.~N. Kirillov and N.~Y. Reshetikhin, \emph{The {B}ethe ansatz and the
  combinatorics of {Y}oung tableaux}, Zap. Nauchn. Sem. Leningrad. Otdel. Mat.
  Inst. Steklov. (LOMI) \textbf{155} (1986), no.~Differentsialnaya Geometriya,
  Gruppy Li i Mekh. VIII, 65--115, 194, {\em translation in J. Soviet Math.}
  \textbf{41} (1988), no. 2, 925--955.

\bibitem[MW05]{MW05}
T.~B. McAllister and K.~M. Woods, \emph{The minimum period of the {E}hrhart
  quasi-polynomial of a rational polytope}, J. Combin. Theory Ser. A
  \textbf{109} (2005), no.~2, 345--352, \mbox{arXiv:math.CO/0310255}.

\bibitem[PRV00]{PRV00}
B.~Poonen and F.~Rodriguez-Villegas, \emph{Lattice polygons and the number 12},
  Amer. Math. Monthly \textbf{107} (2000), no.~3, 238--250.

\bibitem[Sta80]{Sta80}
R.~P. Stanley, \emph{Decompositions of rational convex polytopes}, Ann.
  Discrete Math. \textbf{6} (1980), 333--342, Combinatorial mathematics,
  optimal designs and their applications (Proc. Sympos. Combin. Math. and
  Optimal Design, Colorado State Univ., Fort Collins, Colo., 1978).

\bibitem[Sta97]{Sta97}
\bysame, \emph{Enumerative combinatorics. {V}ol. 1}, Cambridge Studies in
  Advanced Mathematics, vol.~49, Cambridge University Press, Cambridge, 1997,
  With a foreword by Gian-Carlo Rota, Corrected reprint of the 1986 original.

\bibitem[Syd65]{Syd65}
J.-P. Sydler, \emph{Conditions n\'ecessaires et suffisantes pour
  l'\'equivalence des poly\`edres de l'espace euclidien \`a trois dimensions},
  Comment. Math. Helv. \textbf{40} (1965), 43--80.

\end{thebibliography}
    
    \def\cprime{$'$}
\providecommand{\bysame}{\leavevmode\hbox to3em{\hrulefill}\thinspace}
\providecommand{\MR}{\relax\ifhmode\unskip\space\fi MR }
% \MRhref is called by the amsart/book/proc definition of \MR.
\providecommand{\MRhref}[2]{%
  \href{http://www.ams.org/mathscinet-getitem?mr=#1}{#2}
}
\providecommand{\href}[2]{#2}

\end{document}